\numberwithin{equation}{section}
\theoremstyle{plain}
\newtheorem{Theorem}{Theorem}[section]
\newtheorem{lemma}[Theorem]{Lemma}
\theoremstyle{definition}
\begin{document}
\pagestyle{plain}
\setlength{\baselineskip}{20pt}

\title{Non-central limit theorems for quadratic functionals of Hermite-driven long memory moving average processes}
\author{T. T. Diu Tran\footnote{Facult\'{e} des Sciences, de la Technologie et de la Communication, UR en math\'{e}matiques. Universit\'{e} du Luxembourg, 6 rue Richard Coudenhove-Kalergi, L-1359 Luxembourg. E-mail: diu.tran@uni.lu}}
\maketitle

\begin{abstract} 
Let $(Z_t^{(q, H)})_{t \geq 0}$ denote a Hermite process of order $q \geq 1$ and self-similarity parameter $H  \in (\frac{1}{2}, 1)$. Consider the Hermite-driven moving average process
$$X_t^{(q, H)} = \int_0^t x(t-u) dZ^{(q, H)}(u), \qquad t \geq 0.$$ 
In the special case of $x(u) = e^{-\theta u}, \theta > 0$, $X$ is the non-stationary Hermite Ornstein-Uhlenbeck process of order $q$. Under suitable integrability conditions on the kernel $x$, we prove that as $T \to \infty$, the normalized quadratic functional
$$G_T^{(q, H)}(t)=\frac{1}{T^{2H_0 - 1}}\int_0^{Tt}\Big(\big(X_s^{(q, H)}\big)^2 - E\Big[\big(X_s^{(q, H)}\big)^2\Big]\Big) ds , \qquad t \geq 0,$$
where $H_0 = 1 + (H-1)/q$, converges in the sense of finite-dimensional distribution to the Rosenblatt process of parameter $H' =  1 + (2H-2)/q$, up to a multiplicative constant, irrespective of self-similarity parameter whenever $q \geq 2$. In the Gaussian case $(q=1)$, our result complements the study started by Nourdin \textit{et al} in \cite{Ivan2}, where either central or non-central limit theorems may arise depending on the value of self-similarity parameter. A crucial key in our analysis is an extension of the connection between the classical multiple Wiener-It\^{o} integral and the one with respect to a random spectral measure (initiated by Taqqu (1979)), which may be independent of interest. 
\end{abstract}

\textbf{Key words:} Non-central limit theorems, multiple Wiener-It\^{o} integrals, Hermite process, Rosenblatt process, Hermite Ornstein-Uhlenbeck process.

\textbf{2010 Mathematics Subject Classification:} 60F05, 60G22, 60H05, 60H07.

\section{Motivation and main results}

Let $(Z_t^{(q, H)})_{t \geq 0}$ be a Hermite process of order $q \geq 1$ and self-similarity parameter $H  \in (\frac{1}{2}, 1)$. It is a $H$-self-similar process with stationary increments, exhibits long-range dependence and can be expressed as a multiple Wiener-It\^{o} integral of order $q$ with respect to a two-sided standard Brownian motion $(B(t))_{t \in \mathbb{R}}$ as follows:
\begin{equation}\label{eq:H1}
Z^{(q, H)}(t) = c(H, q) \int_{\mathbb{R}^q} \bigg( \int_0^t \prod_{j=1}^q(s- \xi_j)_+^{H_0 - \frac{3}{2}}ds\bigg) dB(\xi_1)\ldots dB(\xi_q),
\end{equation}
where
\begin{equation}\label{eq:H2}
c(H, q) = \sqrt{\frac{H(2H - 1)}{q! \beta^q(H_0 - \frac{1}{2}, 2-2H_0)}} \qquad \text{and} \qquad H_0 = 1+\frac{H-1}{q}.
\end{equation}
Particular examples include the fractional Brownian motion $(q=1)$ and the Rosenblatt process $(q=2)$. For $q \geq 2$, it is no longer Gaussian. All Hermite processes share the same basic properties with fractional Brownian motion such as self-similarity, stationary increments, long-range dependence and even covariance structure. The Hermite process has been pretty much studied in the last decade, due to its potential to be good model for various phenomena.

A theory of stochastic integration with respect to $Z^{(q, H)}$, as well as stochastic differential equation driven by this process, have been considered recently. We refer to \cite{Ivan, Nualart} for a recent account of the fractional Brownian motion and its large amount of applications. We refer to \cite{Taqqu2, Tudor2,Tudor} for different aspects of the Rosenblatt process. Furthermore, in the direction of stochastic calculus, the construction of Wiener integrals with respect to $Z^{(q, H)}$ is studied in \cite{Tudor1}. According to this latter reference, stochastic integrals of the form
\begin{equation}\label{eq:12}
\int_{\mathbb{R}}f(u)dZ^{(q, H)}(u)
\end{equation}
are well-defined for elements of $\mathcal{H} = \{f: \mathbb{R} \to \mathbb{R}: \int_{\mathbb{R}}\int_{\mathbb{R}} f(u)f(v)|u-v|^{2H-2}dudv < \infty\}$, endowed with the norm
\begin{equation}\label{eq:14}
||f||_{\mathcal{H}}^2 = H(2H-1)\int_{\mathbb{R}}\int_{\mathbb{R}} f(u)f(v)|u-v|^{2H-2}dudv.
\end{equation}
Moreover, when $f \in \mathcal{H}$, the stochastic integral (\ref{eq:12}) can be written as
\begin{equation}\label{eq:13}
\int_{\mathbb{R}}f(u)dZ^{(q, H)}(u) = c(H, q) \int_{\mathbb{R}^q}\bigg(\int_{\mathbb{R}}f(u) \prod_{j=1}^q(u- \xi_j)_+^{H_0 - \frac{3}{2}}du\bigg)dB(\xi_1)\ldots dB(\xi_q)
\end{equation}
where $c(H, q)$ and $H_0$ are as in (\ref{eq:H2}). Since the elements of $\mathcal{H}$ may be not functions but distributions (see \cite{Nualart}), it is more practical to work with the following subspace of $\mathcal{H}$, 
which is a set of functions:
$$ |\mathcal{H}| = \bigg\{f: \mathbb{R} \to \mathbb{R}: \int_{\mathbb{R}}\int_{\mathbb{R}} |f(u)||f(v)||u-v|^{2H-2}dudv < \infty \bigg\}.$$

Consider the stochastic integral equation
\begin{equation}\label{eq:SDE}
X(t) = \xi  - \lambda \int_0^tX(s)ds + \sigma Z^{(q, H)}(t), \qquad t \geq 0,
\end{equation}
where $ \lambda, \sigma > 0$ and where the initial condition $\xi$ can be any random variable. By \cite[Prop. 1]{Tudor1}, the unique continuous solution of (\ref{eq:SDE}) is given by
$$X(t) = e^{-\lambda t} \bigg( \xi + \sigma \int_0^t e^{\lambda u} dZ^{(q, H)}(u) \bigg), \qquad t \geq 0.$$
In particular, if  for $\xi$ we choose $\xi = \sigma \int_{-\infty}^0 e^{\lambda u} dZ^{q, H}(u)$, then
\begin{equation}\label{eq:17}
X(t) = \sigma \int_{-\infty}^t e^{-\lambda(t-u)}dZ^{(q, H)}(u), \qquad t\geq 0.
\end{equation}
According to \cite{Tudor1}, the process $X$ defined by (\ref{eq:17}) is referred to as the Hermite Ornstein-Uhlenbeck process of order $q$. On the other hand, if the initial condition $\xi$ is set to be zero, then the unique continuous solution of (\ref{eq:SDE}) is this time given by
\begin{equation}\label{eq:18}
X(t) = \sigma \int_0^t e^{-\lambda(t-u)}dZ^{(q, H)}(u), \qquad t\geq 0.
\end{equation}
In this paper, we call the stochastic process (\ref{eq:18}) the \textit{non-stationary} Hermite Ornstein-Uhlenbeck process of order $q$. It is a particular example of a wider class of moving average processes driven by Hermite process, of the form
\begin{equation}\label{eq:Xt}
X_t^{(q, H)} := \int_0^t x(t-u) dZ^{(q, H)}(u), \qquad t \geq 0.
\end{equation}

In many situations of interests (see, e.g., \cite{Viens1, Viens2}), we may have to analyze the asymptotic behavior of the \textit{quadratic functionals of $X_t^{(q, H)}$} for statistical purposes. More precisely, let us consider
\begin{equation}\label{eq:Gt}
G_T^{(q, H)}(t):=\frac{1}{T^{2H_0 - 1}}\int_0^{Tt}\Big(\big(X_s^{(q, H)}\big)^2 - E\Big[\big(X_s^{(q, H)}\big)^2\Big]\Big) ds.
\end{equation}
In this paper, we will show that $G_T^{(q, H)}$  converges in the sense of finite-dimensional distribution to the Rosenblatt process (up to a multiplicative constant), irrespective of the value of $q \geq 2$ and $H \in (\frac{1}{2}, 1)$. The case $q=1$ is apart, see Theorem \ref{Theorem2} below.

\begin{Theorem}\label{Theorem1}
Let $H \in (\frac{1}{2}, 1)$ and let $Z^{(q, H)}$ be a Hermite process of order $q \geq 2$ and self-similarity parameter $H$. Consider the Hermite-driven moving average process $X^{(q, H)}$ defined by (\ref{eq:Xt}), and assume that the kernel $x$ is a real-valued integrable function on $[0, \infty)$ satisfying, in addition,
\begin{equation}\label{eq:1}
\int_{\mathbb{R}_+^2} |x(u)||x(v)||u-v|^{2H-2}dudv < \infty.
\end{equation}
Then, as $T \to \infty$, the family of stochastic processes $G_T^{(q, H)}$ converges in the sense of finite-dimensional distribution to $b(H, q)R^{H'}$, where $R^{H'}$ is the Rosenblatt process of parameter $H' =  1 + (2H-2)/q$ (which is the \emph{second-order} Hermite process of parameter $H'$), and the multiplicative constant $b(H, q)$ is given by
\begin{equation}\label{eq:19}
b(H, q) = \frac{H(2H-1)}{\sqrt{(H_0 -\frac{1}{2})(4H_0 - 3)}}\int_{\mathbb{R}_+^2}x(u)x(v)|u-v|^{(q-1)(2H_0 -2)}dudv.
\end{equation}
(The fact that (\ref{eq:19}) is well-defined is part of the conclusion of the theorem.)
\end{Theorem}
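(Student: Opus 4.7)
The natural approach is a Wiener chaos decomposition. By (\ref{eq:13}) one has $X_s^{(q,H)}=c(H,q)\,I_q(g_s)$, where
$$g_s(\xi_1,\dots,\xi_q)=\int_0^s x(s-u)\prod_{j=1}^q(u-\xi_j)_+^{H_0-3/2}du$$
and $I_q$ denotes the $q$-th multiple Wiener-It\^o integral. The product formula then gives
$$X_s^2-E[X_s^2]=c(H,q)^2\sum_{r=0}^{q-1}r!\binom{q}{r}^2 I_{2q-2r}(g_s\widetilde{\otimes}_r g_s).$$
The heuristic driving non-central limit theorems of this kind is that the summand of smallest chaos order dominates; here that is $r=q-1$, which lives in the second Wiener chaos---exactly where the Rosenblatt process lives. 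The proof therefore splits into (a) convergence of the $r=q-1$ contribution to $b(H,q)R^{H'}(t)$ in $L^2$, and (b) vanishing of every $r<q-1$ contribution after normalization.

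For (a), I compute the $(q-1)$-fold contraction via the beta identity $\int_{\mathbb{R}}(u-\eta)_+^{H_0-3/2}(v-\eta)_+^{H_0-3/2}d\eta=\beta_0\,|u-v|^{2H_0-2}$, with $\beta_0=\beta(H_0-\tfrac12,2-2H_0)$. After the change of variables $u=s-a$, $v=s-b$, this yields
$$g_s\otimes_{q-1}g_s(\xi_1,\xi_2)=\beta_0^{q-1}\!\!\int_0^s\!\!\int_0^s\! x(a)x(b)|a-b|^{(q-1)(2H_0-2)}(s-a-\xi_1)_+^{H_0-3/2}(s-b-\xi_2)_+^{H_0-3/2}da\,db.$$
Because $(q-1)(2H_0-2)>2H-2$, the weight $|a-b|^{(q-1)(2H_0-2)}$ is strictly less singular than $|a-b|^{2H-2}$ on the diagonal and bounded at infinity; together with $x\in L^1(\mathbb{R}_+)$ and (\ref{eq:1}) this shows that $J:=\iint_{\mathbb{R}_+^2}x(a)x(b)|a-b|^{(q-1)(2H_0-2)}da\,db$ is finite---which verifies the parenthetical claim about $b(H,q)$. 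For large $s$, I replace $(s-a-\xi_i)_+^{H_0-3/2}$ by $(s-\xi_i)_+^{H_0-3/2}$, justified by dominated convergence using the integrable majorant built from (\ref{eq:1}). Integrating over $s\in[0,Tt]$ and normalizing by $T^{2H_0-1}$, the kernel converges in $L^2(\mathbb{R}^2)$ to a constant multiple of the Rosenblatt kernel $L_t(\xi_1,\xi_2)=\int_0^t(s-\xi_1)_+^{H_0-3/2}(s-\xi_2)_+^{H_0-3/2}ds$---using crucially that $H'_0:=1+(H'-1)/2$ coincides with $H_0$ when $H'=1+(2H-2)/q$. Self-similarity of Brownian motion, $I_2(L_{Tt})\stackrel{d}{=}T^{2H_0-1}I_2(L_t)$, absorbs the scaling and identifies the limit as $b(H,q)R^{H'}(t)$.

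For (b), each $r<q-1$ term lies in a chaos of order $\ge 4$; its $L^2$-norm reduces by the isometry formula, through iterated applications of the same beta identity, to an integral involving powers of $|u-v|$. A careful exponent count shows that the resulting growth in $T$ is strictly slower than $T^{2H_0-1}$, so after normalization these terms are $o(1)$ in $L^2$. Finite-dimensional convergence then follows because the limit lies in the second chaos: it suffices to verify joint $L^2$-convergence of the family of normalized kernels $T^{-(2H_0-1)}c(H,q)^2(q-1)!q^2\int_0^{Tt_i}g_s\otimes_{q-1}g_s\,ds$ to the corresponding Rosenblatt kernels for any $t_1,\dots,t_n$.

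The main obstacle, and the reason for the spectral-representation device advertised in the abstract, is the $L^2$-kernel analysis in step (a): the replacement $(s-a-\xi)_+^{H_0-3/2}\mapsto(s-\xi)_+^{H_0-3/2}$, and the related pointwise asymptotics needed to control the higher chaoses in (b), are delicate because of the local singularity at $s=\xi$. Transferring the analysis to the spectral side converts these power-type time-domain kernels into explicit Fourier expressions, whose uniform asymptotics are easier to handle, and thereby reduces both the convergence of the dominant $I_2$ term and the negligibility of the remaining chaoses to routine power counting.
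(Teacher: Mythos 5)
Your proposal follows essentially the same route as the paper: chaotic decomposition via the product formula, evaluation of the contractions through the beta identity \eqref{eq:2}, identification of the $r=q-1$ projection onto the second Wiener chaos as the dominant term, negligibility of the chaoses of order $\geq 4$, and an $L^2$ analysis of the surviving kernel carried out after transfer to the spectral domain. The one step that would fail as first written is the time-domain version of (a): replacing $(s-a-\xi_i)_+^{H_0-3/2}$ by $(s-\xi_i)_+^{H_0-3/2}$ ``by dominated convergence using the integrable majorant built from \eqref{eq:1}'' is not justified, since \eqref{eq:1} only controls the $(a,b)$-integration and provides no majorant for the $L^2(d\xi_1 d\xi_2)$-norm, where $(s-a-\xi)_+^{2H_0-3}$ has a non-integrable singularity and the shifted singularities make the pointwise substitution meaningless; you correctly flag this as the obstacle and resolve it by passing to the spectral side, which is precisely what the paper does via Lemma \ref{Lemma2} (whose proof itself requires a truncation argument because $u_+^{H_0-3/2}$ belongs to neither $L^1$ nor $L^2$). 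Once there, your ``careful exponent count'' for (b) becomes the explicit factor $T^{-2(2-2H_0)(q-1-r)}\to 0$ for $r\leq q-2$ together with the uniform boundedness of the remaining spectral integral, exactly as in the paper's reduction lemma.
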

Theorem \ref{Theorem1} only deals with $q \geq 2$, because $q=1$ is different. In this case, $Z^{(1, H)}$ is nothing but the fractional Brownian motion of index $H$ and $X^{(1, H)}$ is the fractional Volterra process, as considered by Nourdin, Nualart and Zintout in \cite{Ivan2}. In this latter reference, a Central Limit Theorem for $G_T^{(1, H)}$ has been established for $H \in (\frac{1}{2}, \frac{3}{4})$. Here, we rather study the situation where $H \in (\frac{3}{4}, 1)$ and, in contrast to \cite{Ivan2}, we show a Non-Central Limit Theorem. More precisely, we have the following theorem. 

\begin{Theorem}\label{Theorem2}
Let $H \in (\frac{3}{4}, 1)$. Consider the fractional Volterra process $X^{(1, H)}$ given by (\ref{eq:Xt}) with $q=1$. If the function $x$ defining $X^{(1, H)}$ is an integrable function on $[0, \infty)$ and satisfies (\ref{eq:1}), then the family of stochastic processes $G_T^{(1, H)}$ converges in the sense of finite-dimensional distribution, as $T \to \infty$, to the Rosenblatt process $R^{H''}$ of parameter $H'' = 2H-1$ multiplied by $b(1, H)$ as above.
\end{Theorem}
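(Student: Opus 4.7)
The proof closely parallels that of Theorem \ref{Theorem1}, but is considerably simpler because for $q=1$ the product formula for multiple Wiener-It\^o integrals reduces to the elementary identity $I_1(f)^2 = I_2(f\otimes f) + \|f\|_{L^2}^2$. Using (\ref{eq:13}) with $q=1$ (so $H_0=H$), one writes $X_s^{(1,H)} = I_1(f_s)$ with $f_s(\xi) = c(H,1)\int_0^s x(s-u)(u-\xi)_+^{H-3/2}du$, which lies in $L^2(\mathbb{R})$ by (\ref{eq:1}). Combining the identity above with stochastic Fubini then yields
\[
G_T^{(1,H)}(t) = I_2\bigl(g_T(t)\bigr), \qquad g_T(t)(\xi_1,\xi_2) := T^{1-2H}\int_0^{Tt} f_s(\xi_1)\,f_s(\xi_2)\,ds.
\]

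Since a direct estimate shows $g_T(t)\to 0$ pointwise on $\mathbb{R}^2$, the plan is to identify the distributional limit after a rescaling. The self-similarity of Brownian motion gives $I_2(g_T(t))\stackrel{d}{=}I_2(\hat g_T(t))$, where $\hat g_T(t)(\eta_1,\eta_2) := T\,g_T(t)(T\eta_1,T\eta_2)$, and one checks that $\|\hat g_T(t)\|_{L^2(\mathbb{R}^2)} = \|g_T(t)\|_{L^2(\mathbb{R}^2)}$. The substitutions $s = T\sigma$, $u_j = s-v_j$ transform $\hat g_T(t)(\eta_1,\eta_2)$ into
\[
c(H,1)^2\int_0^t\!\!\int_0^{T\sigma}\!\!\int_0^{T\sigma} x(v_1)\,x(v_2)\prod_{j=1}^{2}\bigl(\sigma - \eta_j - v_j/T\bigr)_+^{H-3/2}\,dv_1\,dv_2\,d\sigma.
\]
As $T\to\infty$, the shifts $v_j/T$ become negligible and the $v$-integrals produce the factor $\bigl(\int_0^\infty x\bigr)^2$, yielding the pointwise limit $c(H,1)^2\bigl(\int x\bigr)^2\int_0^t\prod_{j}(\sigma-\eta_j)_+^{H-3/2}\,d\sigma$. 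Since the Rosenblatt process $R^{H''}$ of parameter $H''=2H-1$ has associated $H_0^{H''} = 1+(H''-1)/2 = H$---so the exponents $H-3/2$ match those in $f_s$---the kernel of $R^{H''}(t)$ is $k_t^{H''}(\eta_1,\eta_2) = c(H'',2)\int_0^t\prod_j(\sigma-\eta_j)_+^{H-3/2}\,d\sigma$. A direct computation with (\ref{eq:H2}), using $(2H-1) = 2(H-1/2)$, gives the algebraic identity $c(H,1)^2\bigl(\int x\bigr)^2 = b(1,H)\,c(H'',2)$, identifying the pointwise limit of $\hat g_T(t)$ as exactly $b(1,H)\,k_t^{H''}$.

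The final step is to upgrade to $L^2(\mathbb{R}^2)$-convergence. By Brezis-Lieb (or Radon-Riesz) it suffices to show $\|\hat g_T(t)\|^2 \to b(1,H)^2\|k_t^{H''}\|^2$, which I would compute directly by expanding the norm via the classical identity $\int_{\mathbb R}(\sigma-\xi)_+^{H-3/2}(\sigma'-\xi)_+^{H-3/2}d\xi = \beta(H-\tfrac12,2-2H)|\sigma-\sigma'|^{2H-2}$. The problem then reduces to verifying the finiteness and exact value of $\int_0^t\!\int_0^t|\sigma-\sigma'|^{4H-4}d\sigma\,d\sigma'$, which holds precisely when $4H-3>0$, i.e.\ in the assumed range $H>\tfrac34$. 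The $L^2$-isometry of $I_2$ then delivers $G_T^{(1,H)}(t)\stackrel{d}{=}I_2(\hat g_T(t)) \to b(1,H) R^{H''}(t)$ in $L^2(\Omega)$, and finite-dimensional convergence follows by running the same argument on arbitrary real linear combinations $\sum_i \alpha_i G_T^{(1,H)}(t_i)$ (these remain in the second chaos, and the associated kernels converge to $\sum_i \alpha_i k_{t_i}^{H''}$). The chief obstacle is the uniform $L^2$-domination of $\hat g_T$ in $T$: assumption (\ref{eq:1}) provides the integrable majorant needed for the $v$-integrals, while $H>\tfrac34$ is exactly the threshold that puts $k_t^{H''}$ in $L^2(\mathbb{R}^2)$---precisely the dichotomy between non-central and central behavior isolated in \cite{Ivan2}.
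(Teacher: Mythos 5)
Your plan follows a genuinely different route from the paper: you stay entirely in the time-domain (Brownian) representation of the second chaos, rescale the kernel using the $1/2$-self-similarity of $B$, and aim to upgrade a pointwise limit of $\hat g_T$ to $L^2(\mathbb{R}^2)$-convergence via convergence of norms (Brezis--Lieb). The paper instead transfers $F_{2,T}$ to the spectral domain through Lemma \ref{Lemma2}, where the kernel becomes $|\lambda_1|^{\frac12-H}|\lambda_2|^{\frac12-H}\int_0^t e^{i(\lambda_1+\lambda_2)s}\int_{[0,Ts]^2}e^{-i(\lambda_1\xi_1+\lambda_2\xi_2)/T}x(\xi_1)x(\xi_2)\,d\xi_1 d\xi_2\,ds$; there the whole $T$-dependence sits in oscillatory factors bounded by $1$, so dominated convergence (with dominant $\|x\|_{L^1}^2$) and the Cauchy property in $L^2(\mathbb{R}^2)$ are immediate. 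Your reduction to a single second-chaos term, the scaling identities, and the constant identification $c(H,1)^2\bigl(\int_0^\infty x\bigr)^2=b(1,H)\,c(H'',2)$ with $H''=2H-1$ all check out.

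There is, however, a genuine gap at your central analytic step, the claim that ``the shifts $v_j/T$ become negligible,'' i.e.\ that
\begin{equation*}
h_T(\sigma,\eta):=\int_0^{T\sigma}x(v)\bigl(\sigma-\eta-v/T\bigr)_+^{H-\frac32}\,dv\;\longrightarrow\;(\sigma-\eta)_+^{H-\frac32}\int_0^\infty x(v)\,dv .
\end{equation*}
Since $H-\tfrac32<0$, the factor $(\sigma-\eta-v/T)_+^{H-3/2}$ has a moving singularity at $v=T(\sigma-\eta)$, and neither $x\in L^1$ nor (\ref{eq:1}) supplies a $T$-uniform integrable majorant near it. The claimed limit is in fact false under your hypotheses: take $x=\sum_n n\,\mathbf{1}_{[n,\,n+n^{-3}]}$, which is integrable and satisfies (\ref{eq:1}) for $H>\tfrac34$; for $0\le\eta<\sigma$ and $T_n$ chosen so that $T_n(\sigma-\eta)=n+n^{-3}/2$, one gets $h_{T_n}(\sigma,\eta)\gtrsim n\,T_n^{3/2-H}(n^{-3})^{H-1/2}\asymp n^{4-4H}\to\infty$. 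So the pointwise convergence of $\hat g_T$ cannot be obtained by ``dropping the shifts,'' and Brezis--Lieb then has nothing to work with. The same difficulty resurfaces in the norm computation: after the beta-function identity, $\|\hat g_T(t)\|^2$ involves $\iint x(v)x(v')\,|\sigma-\sigma'-(v-v')/T|^{2H-2}\,dv\,dv'$, which at $\sigma=\sigma'$ equals $T^{2-2H}\iint x(v)x(v')|v-v'|^{2H-2}dv\,dv'\to\infty$, so a careful near-diagonal analysis in $(\sigma,\sigma')$ is required and is not supplied. This is precisely the obstruction that the spectral representation is designed to remove; to repair your argument you would either need stronger assumptions on $x$ (e.g.\ boundedness), or you should abandon the pointwise route and establish the Cauchy property of $\hat g_T$ in $L^2(\mathbb{R}^2)$ by exact computation of inner products --- which is, in substance, what the paper does after Fourier transforming.
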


It is worth pointing out that, irrespective of the self-similarity parameter $H \in (\frac{1}{2}, 1)$, the normalized quadratic functionals of any non-Gaussian Hermite-driven long memory moving average processes $(q \geq 2)$ exhibits a convergence to a random variable belonging to the second Wiener chaos. It is in strong contrast with what happens in the Gaussian case $(q=1)$, where either central or non-central limit theorems may arise depending on the value of the self-similarity parameter.

We note that our Theorem \ref{Theorem2} is pretty close to Taqqu's seminal result \cite{Taqqu1975}, but cannot be obtained as a consequence of it. In contrast, the statement of Theorem \ref{Theorem1} is completely new, and provides new hints on the importance and relevance of the Rosenblatt process in statistics.

Our proofs of Theorems \ref{Theorem1} and \ref{Theorem2} are based on the use of chaotic expansions into multiple Wiener-It\^{o} integrals and the key transformation lemma from the classical multiple Wiener-It\^{o} integrals into the one with respect to a random spectral measure (following a strategy initiated by Taqqu in \cite{Taqqu}). Let us sketch them. Since the random variable $X_t^{(q, H)}$ is an element of the $q$-th Wiener chaos, we can firstly rely on the product formula for multiple integrals to obtain that the quadratic functional $G_T^{(q, H)}(t)$ can be decomposed into a sum of multiple integrals of even orders from $2$ to $2q$. Secondly, we prove that the projection onto the second Wiener chaos converges in $L^2(\Omega)$ to the Rosenblatt process: we do this by using its spectral representation of multiple Wiener-It\^{o} integrals and by checking the $L^2(\mathbb{R}^2)$ convergence of its kernel. Finally, we prove that all the remaining terms in the chaos expansion are asymptotically negligible. 

Our findings and the strategy we have followed to obtain them owe a lot and were influenced by several seminal papers on Non-Central Limit Theorems for functionals of Gaussian (or related) processes, including Dobrushin and Major \cite{Dobrushin Major}, Taqqu \cite{Taqqu} and most recently, Clausel \textit{et al} \cite{CATudor1, CATudor2} and Neufcourt and Viens \cite{Viens}.
 
Our paper is organised as follows. Section $2$ contains preliminary key lemmas. The proofs of our two main results, namely Theorems \ref{Theorem1} and \ref{Theorem2}, are then provided in Section $3$ and Section $4$. 

\section{Preliminaries}
Here, we mainly follow Taqqu \cite{Taqqu}. We describe a useful connection between multiple Wiener-It\^{o} integrals with respect to random spectral measure and the classical stochastic It\^{o} integrals. Stochastic representations of the Rosenblatt process are then provided at the end of the section.

\subsection{Multiple Wiener-It\^{o} integrals with respect to Brownian motion}

Let $f \in L^2(\mathbb{R}^q)$ and let us denote by $I_q^B(f)$ the $q$th multiple Wiener-It\^{o} integral of $f$ with respect to the standard two-sided Brownian motion $(B_t)_{t \in \mathbb{R}}$, in symbols
$$I_q^B(f) = \int_{\mathbb{R}^q} f(\xi_1, \ldots, \xi_q) dB(\xi_1)\ldots dB(\xi_q).$$
When $f$ is symmetric, we can see $I_q^B(f)$ as the following iterated adapted It\^{o} stochastic integral:
$$I_q^B(f) = q! \int_{-\infty}^{\infty}dB(\xi_1) \int_{-\infty}^{\xi_1}dB(\xi_2) \ldots \int_{-\infty}^{\xi_{q-1}}dB(\xi_q) f(\xi_1, \ldots, \xi_q).$$
Moreover, when $f$ is not necessarily symmetric one has $I_q^B(f) = I_q^B(\widetilde{f})$, where $\widetilde{f}$ is the symmetrization of $f$ defined by
\begin{equation}\label{eq:dola}
\widetilde{f}(\xi_1, \ldots, \xi_q) = \frac{1}{q!}\sum_{\sigma \in \mathfrak{S}_q} f(\xi_{\sigma(1)}, \ldots, \xi_{\sigma(q)}).
\end{equation}
The set of random variables of the form $I_q^B(f), f \in L^2(\mathbb{R}^q)$, is called the $q$th Wiener chaos of $B$. We refer to Nualart's book \cite{Nualart} (chapter 1 therein) or Nourdin and Peccati's books \cite{Ivan, PeccatiIvan} for a detailed exposition of the construction and properties of multiple Wiener-It\^{o} integrals. Here, let us only recall the product formula between two multiple integrals: if $f \in L^2(\mathbb{R}^p)$ and $g \in L^2(\mathbb{R}^q)$ are two symmetric functions then
\begin{equation}\label{eq:P1}
I_p^B(f)I_q^B(g) = \sum_{r=0}^{p \wedge  q} r!\binom{p}{r}\binom{q}{r}I_{p+q-2r}^B(f \widetilde{\otimes}_r g),
\end{equation}
where the contraction $f \otimes_r g$, which belongs to $L^2(\mathbb{R}^{p+q-2r})$ for every $r = 0, 1, \ldots, p \wedge q$, is given by
\begin{align}\label{eq:P2}
f \otimes_r g& (y_1, \ldots, y_{p-r}, z_1, \ldots, z_{q-r}) \nonumber\\ 
& = \int_{\mathbb{R}^r} f(y_1, \ldots, y_{p-r}, \xi_1, \ldots, \xi_r) g(z_1, \ldots, z_{q-r}, \xi_1, \ldots, \xi_r) d\xi_1 \ldots d\xi_r
\end{align}
and where a tilde denotes the symmetrization, see (\ref{eq:dola}). Observe that
\begin{equation}\label{eq:3}
\| f \widetilde{\otimes}_r g\|_{L^2(\mathbb{R}^{p+q-2r})} \leq \| f \otimes_r g\|_{L^2(\mathbb{R}^{p+q-2r})} \leq \|f\|_{L^2(\mathbb{R}^p)}\|g\|_{L^2(\mathbb{R}^q)}, \quad r= 0, \ldots, p\wedge q
\end{equation}
by Cauchy-Schwarz inequality, and that $f \otimes_p  g = \left\langle f, g \right\rangle_{L^2(\mathbb{R}^p)}$ when $p=q$.
Furthermore, we have the orthogonality property
$$E[I_p^B(f)I_q^B(g)] =
\begin{cases}
 & p! \big\langle \widetilde{f}, \widetilde{g} \big\rangle_{L^2(\mathbb{R}^p)} \qquad\text{if } p=q\\
& 0 \qquad\qquad\qquad\quad \text{ if } p \ne q.
\end{cases}$$
\subsection{Multiple Wiener-It\^{o} integrals with respect to a random spectral measure}

Let $W$ be a Gaussian complex-valued random spectral measure that satisfies $E[W(A)] = 0, E[W(A)\overline{W(B)}] = \mu(A \cap B), W(A) = \overline{W(-A)} $ and $W(\bigcup_{j=1}^n A_j) = \sum_{j=1}^n W(A_j)$ for all disjoint Borel sets that have finite Lebesgue measure (denoted here by $\mu$). The Gaussian random variables $\text{Re}W(A)$ and $\text{Im}W(A)$ are then independent with expectation zero and variance $\mu(A)/2$. We now recall briefly the construction of multiple Wiener-It\^{o} integrals with respect to $W$, as defined in Major \cite{Major} or Section $4$ of Dobrushin \cite{Dobrushin}. To define such stochastic integrals let us introduce the real Hilbert space $\mathscr{H}_m$ of complex-valued symmetric functions $f(\lambda_1, \ldots, \lambda_m), \lambda_j \in \mathbb{R}, j=1, 2, \ldots, m$, which are even, i.e. $f(\lambda_1, \ldots, \lambda_m) = \overline{f(-\lambda_1, \ldots, -\lambda_m)}$, and square integrable, that is, 
$$\|f \|^2 = \int_{\mathbb{R}^m}|f(\lambda_1, \ldots, \lambda_m)|^2 d\lambda_1\ldots d\lambda_m < \infty.$$
The scalar product is similarly defined: namely, if $f, g \in \mathscr{H}_m$, then
$$\left\langle f, g \right\rangle_{\mathscr{H}_m} = \int f(\lambda_1, \ldots, \lambda_m)\overline{g(\lambda_1, \ldots, \lambda_m)}d\lambda_1 \ldots d\lambda_m.$$
The integrals $I_m^W$ are then defined through an isometric mapping from $\mathscr{H}_m$ to $L^2(\Omega)$:
$$ f \longmapsto I_m^W(f)  = \int_{\mathbb{R}}f(\lambda_1, \ldots, \lambda_m) W(d\lambda_1)\ldots W(d\lambda_m),$$
Following \emph{e.g.} the lecture notes of Major \cite{Major}, if $f \in \mathscr{H}_m$ and $g \in \mathscr{H}_n$, then $E[I_m^W(f)] = 0$ and
\begin{equation}\label{eq:21}
E[I_m^W(f)I_n^W(g)] =
\begin{cases}
 & m!\left\langle f, g \right\rangle_{\mathscr{H}_m} \text{ if } m=n\\
& 0 \qquad\qquad\quad\text{if } m \ne n.
\end{cases}
\end{equation}

\subsection{Preliminary lemmas}
We recall a connection between the classical Wiener-It\^{o} integral $I^B$ and the one with respect to a random spectral measure $I^W$ that will play an important role in our analysis.

\begin{lemma}\cite[Lemma 6.1]{Taqqu}\label{Lemma6.1} Let $A(\xi_1, \ldots, \xi_m)$ be a real-valued symmetric function in $L^2(\mathbb{R}^m)$ and let
\begin{equation}\label{eq:Fourier}
\mathcal{F}A(\lambda_1, \ldots, \lambda_m) = \frac{1}{(2\pi)^{m/2}}\int_{\mathbb{R}^m}e^{i\sum_{j=1}^m \xi_j\lambda_j}A(\xi_1,\ldots, \xi_m)d\xi_1\ldots d\xi_m
\end{equation}
be its Fourier transform. Then
$$\int_{\mathbb{R}^m}A(\xi_1,\ldots, \xi_m)dB(\xi_1) \ldots dB(\xi_m) \overset{(d)}{=} \int_{\mathbb{R}^m}\mathcal{F}A(\lambda_1,\ldots, \lambda_m)W(d\lambda_1)\ldots W(d\lambda_m).$$
\end{lemma}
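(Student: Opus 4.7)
The plan is to reduce the identity to the one-dimensional spectral representation of the first Wiener chaos and then transfer it to order $m$ via the Hermite-polynomial representation of multiple integrals combined with a density argument.

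First I would observe that both multiple integrals are isometries (up to the normalizing factor $\sqrt{m!}$) from their respective Hilbert spaces — $L^2(\mathbb{R}^m)_{\mathrm{sym}}$ for $I_m^B$ and $\mathscr{H}_m$ for $I_m^W$ — into $L^2(\Omega)$. Since $A$ is real-valued and symmetric, its Fourier transform $\mathcal{F}A$ is symmetric and satisfies the Hermitian relation $\mathcal{F}A(-\lambda_1, \ldots, -\lambda_m) = \overline{\mathcal{F}A(\lambda_1, \ldots, \lambda_m)}$, hence $\mathcal{F}A \in \mathscr{H}_m$; moreover $\|\mathcal{F}A\|_{\mathscr{H}_m} = \|A\|_{L^2(\mathbb{R}^m)}$ by Plancherel's theorem. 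This already matches the second moments of the two sides, so what remains is to match their full distributions.

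By linearity in the kernel and density, I would reduce to the case $A = f^{\otimes m}$ for a single real-valued $f \in L^2(\mathbb{R})$: general symmetric tensor products $f_1 \widetilde{\otimes} \cdots \widetilde{\otimes} f_m$ are recovered by polarization (expanding powers of $f_1 + \cdots + f_m$), and such tensor products span a dense subspace of $L^2(\mathbb{R}^m)_{\mathrm{sym}}$. For a power kernel, the Hermite-polynomial representation of multiple Wiener-It\^o integrals yields
\begin{align*}
I_m^B(f^{\otimes m}) &= \|f\|_{L^2}^{m}\, H_m\!\left(\|f\|_{L^2}^{-1} \int_{\mathbb{R}} f(\xi)\, dB(\xi)\right),\\
I_m^W\bigl((\mathcal{F}f)^{\otimes m}\bigr) &= \|\mathcal{F}f\|_{L^2}^{m}\, H_m\!\left(\|\mathcal{F}f\|_{L^2}^{-1} \int_{\mathbb{R}} \mathcal{F}f(\lambda)\, W(d\lambda)\right),
\end{align*}
where $H_m$ denotes the probabilistic $m$-th Hermite polynomial, and I have used $\mathcal{F}(f^{\otimes m}) = (\mathcal{F}f)^{\otimes m}$. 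Plancherel gives $\|\mathcal{F}f\|_{L^2} = \|f\|_{L^2}$; the random variable $\int_{\mathbb{R}} \mathcal{F}f\, dW$ is real-valued because $\mathcal{F}f$ is Hermitian symmetric and $\overline{W(A)} = W(-A)$, and it is centered Gaussian with variance $\|\mathcal{F}f\|_{L^2}^2 = \|f\|_{L^2}^2$. Hence the two arguments of $H_m$ share the same distribution, whence $I_m^B(f^{\otimes m}) \overset{(d)}{=} I_m^W((\mathcal{F}f)^{\otimes m})$, which is exactly what is needed.

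The main obstacle I anticipate is the rigorous justification of the Hermite-polynomial formula for $I_m^W$ on kernels of the form $(\mathcal{F}f)^{\otimes m}$, since the spectral multiple integral is defined on complex-valued Hermitian-symmetric functions and its diagram/product formula takes a slightly different form from the real-valued case. I would handle this by approximating $f$ in $L^2(\mathbb{R})$ by simple functions, for which the corresponding spectral multiple integral can be computed explicitly and shown to converge, both in $L^2(\Omega)$ and in distribution, to the Hermite-polynomial expression above. Once this bookkeeping is done, everything reduces to the fundamental identity $\int_{\mathbb{R}} f\, dB \overset{(d)}{=} \int_{\mathbb{R}} \mathcal{F}f\, dW$ for the first chaos, which holds because both sides are centered Gaussian families with identical covariance functional $\langle \cdot, \cdot\rangle_{L^2(\mathbb{R})}$ by Plancherel.
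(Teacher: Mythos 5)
The paper offers no proof of this lemma: it is quoted verbatim from Taqqu's 1979 paper, so there is no internal argument to compare against. Your strategy is nevertheless essentially the classical proof of that result: identify the two first-chaos Gaussian families via Plancherel, then transport the identification to the $m$-th chaos through the Hermite-polynomial (It\^{o}) representation. The individual ingredients you invoke are all correct: $\mathcal{F}A\in\mathscr{H}_m$ with $\|\mathcal{F}A\|_{\mathscr{H}_m}=\|A\|_{L^2(\mathbb{R}^m)}$, the factorization $\mathcal{F}(f^{\otimes m})=(\mathcal{F}f)^{\otimes m}$, the a.s.\ realness of $I_1^W(\mathcal{F}f)$ coming from the Hermitian symmetry of $\mathcal{F}f$ and $W(A)=\overline{W(-A)}$, and the Hermite representation of $I_m^W$ on Hermitian tensor powers (which is indeed in Major's notes and does require the simple-function approximation you describe).

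The one step you must not leave as written is the reduction ``by linearity in the kernel and density'' to $A=f^{\otimes m}$. Equality in distribution is not additive: knowing $I_m^B(f_i^{\otimes m})\overset{(d)}{=}I_m^W((\mathcal{F}f_i)^{\otimes m})$ for each $i$ separately does not yield $\sum_i c_i I_m^B(f_i^{\otimes m})\overset{(d)}{=}\sum_i c_i I_m^W((\mathcal{F}f_i)^{\otimes m})$, so the passage from tensor powers to a general symmetric $A$ breaks down at exactly the point where you invoke linearity. The repair is standard and you gesture at it in your last sentence, but it has to come \emph{first}, not last: begin by showing that the two processes $\{I_1^B(f)\}_{f\in L^2(\mathbb{R})}$ and $\{I_1^W(\mathcal{F}f)\}_{f\in L^2(\mathbb{R})}$ are equal in law as Gaussian families (both are real, centered, jointly Gaussian with covariance $\langle f,g\rangle_{L^2(\mathbb{R})}$ by Plancherel), and then observe that $I_m^B(A)$ and $I_m^W(\mathcal{F}A)$ are obtained by applying the \emph{same} measurable functional --- an $L^2(\Omega)$-limit of the same polynomials in the respective first-chaos variables --- to the two families. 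Ordered this way, the argument delivers joint equality in distribution over all kernels $A$ simultaneously, which is in fact what the paper needs downstream (Lemma \ref{Lemma2} and the finite-dimensional convergence in Theorems \ref{Theorem1} and \ref{Theorem2} use several kernels at once), whereas the statement proved by your reduction as written would not even cover a single non-tensor-power kernel.
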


Applying Lemma \ref{Lemma6.1}, we deduce the following lemma which is an extended result of Lemma 6.2 in \cite{Taqqu}.

\begin{lemma}\label{Lemma2}
Let 
$$ A(\xi_1, \ldots, \xi_{m+n}) = \int_{\mathbb{R}^2} \phi(z_1, z_2)\prod_{j=1}^{m}(z_1 - \xi_j)_+^{H_0 -\frac{3}{2}}\prod_{k=m+1}^{m+n}(z_2 - \xi_k)_+^{H_0 -\frac{3}{2}}dz_1dz_2$$
where $\frac{1}{2} < H_0 < 1$ and where $\phi$ is an integrable function on $\mathbb{R}^2$  whose Fourier transform is given by (\ref{eq:Fourier}).
Let 
$$\widetilde{A}(\xi_1, \ldots, \xi_{m+n}) = \frac{1}{(m+n)!}\sum_{\sigma \in \mathfrak{S}_{m+n}}A(\xi_{\sigma(1)}, \ldots, \xi_{\sigma(m+n)})$$
be the symmetrization of $A$. Assume that 
$$\int_{\mathbb{R}^{m+n}}|\widetilde{A}(\xi_1, \ldots, \xi_{m+n})|^2d\xi_1\ldots d\xi_{m+n} < \infty.$$
 Then,
\begin{align*}
&\int_{\mathbb{R}^{m+n}}\widetilde{A}(\xi_1,\ldots, \xi_{m+n})dB(\xi_1)\ldots dB(\xi_{m+n})\\ 
&\overset{(d)}{=} \bigg(\frac{\Gamma(H_0 - \frac{1}{2})}{\sqrt{2\pi}}\bigg)^{m+n} \int_{\mathbb{R}^{m+n}}W(d\lambda_1) \ldots W(d\lambda_{m+n}) \prod_{j=1}^{m+n} |\lambda_j|^{\frac{1}{2} - H_0}\\
&\qquad\qquad\quad\qquad\times\frac{1}{(m+n)!}\sum_{\sigma \in \mathfrak{S}_{m+n}} 2\pi\mathcal{F}\phi(\lambda_{\sigma(1)}+\ldots+ \lambda_{\sigma(m)}, \lambda_{\sigma(m+1)} + \ldots + \lambda_{\sigma(m+n)}).
\end{align*}
\end{lemma}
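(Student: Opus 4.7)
The plan is to reduce the lemma to Lemma \ref{Lemma6.1} by an explicit computation of the Fourier transform $\mathcal{F}\widetilde{A}$. Applying Lemma \ref{Lemma6.1} to the symmetric kernel $\widetilde{A}$, the left-hand side equals in distribution
$$\int_{\mathbb{R}^{m+n}}\mathcal{F}\widetilde{A}(\lambda_1,\ldots,\lambda_{m+n})\,W(d\lambda_1)\cdots W(d\lambda_{m+n}).$$
Because the Fourier transform and permutation of arguments commute (it is just a change of variable in (\ref{eq:Fourier})), one has $\mathcal{F}\widetilde{A}=\widetilde{\mathcal{F}A}$, so it suffices to compute $\mathcal{F}A$ and then average over $\mathfrak{S}_{m+n}$.

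By Fubini (justified under the integrability hypotheses on $\phi$ and on $\widetilde{A}$), I would pull the $(z_1,z_2)$-integration defining $A$ outside of the Fourier transform, reducing the task, for each variable $\xi_j$, to the one-dimensional transform
$$\int_{\mathbb{R}} e^{i\xi\lambda}(z-\xi)_+^{H_0-3/2}\,d\xi = e^{iz\lambda}\int_0^\infty v^{H_0-3/2}e^{iv\lambda}\,dv.$$
Using the classical identity $\int_0^\infty v^{\alpha-1}e^{iv\lambda}\,dv=\Gamma(\alpha)|\lambda|^{-\alpha}e^{i\pi\alpha\,\mathrm{sgn}(\lambda)/2}$ with $\alpha=H_0-\tfrac{1}{2}\in(0,\tfrac{1}{2})$, each such factor evaluates to $e^{iz\lambda}\,\Gamma(H_0-\tfrac{1}{2})\,|\lambda|^{1/2-H_0}\,e^{i\pi(H_0-1/2)\mathrm{sgn}(\lambda)/2}$.

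Collecting the $m+n$ one-variable transforms and recognizing the remaining integral over $(z_1,z_2)$ as $2\pi\,\mathcal{F}\phi(\lambda_1+\cdots+\lambda_m,\lambda_{m+1}+\cdots+\lambda_{m+n})$, one arrives at
$$\mathcal{F}A(\lambda)=\Big(\frac{\Gamma(H_0-\tfrac{1}{2})}{\sqrt{2\pi}}\Big)^{m+n}\prod_{j=1}^{m+n}|\lambda_j|^{1/2-H_0}\prod_{j=1}^{m+n}e^{i\pi(H_0-1/2)\mathrm{sgn}(\lambda_j)/2}\cdot 2\pi\mathcal{F}\phi\Big(\sum_{j=1}^{m}\lambda_j,\sum_{j=m+1}^{m+n}\lambda_j\Big).$$
The prefactor and both products over $j$ are permutation-invariant, so symmetrization only acts on the arguments of $\mathcal{F}\phi$, producing the average over $\mathfrak{S}_{m+n}$ appearing in the statement.

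The remaining obstacle, and the main delicate point of the proof, is the extraneous phase $\prod_j e^{i\pi(H_0-1/2)\mathrm{sgn}(\lambda_j)/2}$, which does not appear in the statement. The plan is to absorb it by noting that $\widetilde{W}(d\lambda):=e^{-i\pi(H_0-1/2)\mathrm{sgn}(\lambda)/2}W(d\lambda)$ defines a complex Gaussian spectral measure with the same law as $W$: one checks that it has mean zero, covariance $E[\widetilde{W}(A)\overline{\widetilde{W}(B)}]=\mu(A\cap B)$, and satisfies the Hermitian condition $\widetilde{W}(-A)=\overline{\widetilde{W}(A)}$ (the last using the change of variable $\lambda\mapsto-\lambda$). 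Consequently the $(m+n)$-fold spectral integral of the kernel \emph{with} the phase against $W$ equals in law the spectral integral of the kernel \emph{without} the phase against $W$, which is precisely the right-hand side of the lemma. Everything else amounts to careful bookkeeping of constants and the Fubini justification under the stated integrability assumptions.
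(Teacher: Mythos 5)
Your overall strategy is the same as the paper's: compute $\mathcal{F}A$, observe that it factors into a permutation-invariant product of one-variable transforms of $u_+^{H_0-3/2}$ times $2\pi\mathcal{F}\phi$ of the two block sums, and absorb the resulting unimodular phase $C(\lambda)=e^{-i\frac{\pi}{2}(H_0-\frac12)\mathrm{sgn}(\lambda)}$ into the spectral measure using the fact that $C(\lambda)W(d\lambda)\overset{(d)}{=}W(d\lambda)$ (this is exactly \cite[Proposition 4.2]{Dobrushin}, which the paper cites; your verification of the Hermitian property of $\widetilde W$ is correct, and this part is not really the delicate point you make it out to be).

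The genuine gap is in the step you dismiss as ``Fubini (justified under the integrability hypotheses on $\phi$ and on $\widetilde A$)''. The function $\xi\mapsto(z-\xi)_+^{H_0-3/2}$ belongs neither to $L^1(\mathbb{R})$ nor to $L^2(\mathbb{R})$ (it fails to be integrable as $\xi\to-\infty$ since $H_0-\tfrac32>-1$), so the one-dimensional transform $\int_{\mathbb{R}}e^{i\xi\lambda}(z-\xi)_+^{H_0-3/2}d\xi$ does not exist as a Lebesgue integral, and the ``classical identity'' you invoke holds only for the improper (conditionally convergent) Riemann integral. Consequently Fubini cannot be applied to factor $\mathcal{F}A$ into a product of such transforms, and the integrability of $\phi$ and the $L^2$ bound on $\widetilde A$ do not repair this: the iterated integral you want to interchange is not absolutely convergent. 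The paper's proof exists precisely to handle this point: it truncates the kernel to $A_T=A\mathbf{1}_{\{|\xi_j|<T\,\forall j\}}$, for which Fubini is legitimate, bounds $|\mathcal{F}A_T|$ uniformly in $T$ via the estimate $\sup_{0\le a\le b}|B_\lambda(a,b)|\le\frac{1}{\sqrt{2\pi}}(\frac{1}{H_0-1/2}+\frac{2}{|\lambda|})$, and only then passes to the limit $T\to\infty$ to identify $\mathcal{F}A$ with the product of improper integrals. Your proof needs this (or an equivalent regularization) to be complete. A minor additional slip: after the substitution $\xi=z-v$ the exponential should read $e^{-iv\lambda}$, not $e^{+iv\lambda}$; this only conjugates the phase and is harmless since the phase is absorbed anyway, but the formula for $\mathcal{F}A$ as you wrote it has the wrong sign in the phase factor.
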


\begin{proof}
Thanks to Lemma \ref{Lemma6.1}, we first estimate the Fourier transform of $A(\xi_1, \ldots, \xi_{m+n})$. Because the function $u_+^{H_0 - \frac{3}{2}}$ belongs neither to $L^1(\mathbb{R})$ nor to $L^2(\mathbb{R})$, by similar arguments as in the proof of \cite[Lemma 6.2]{Taqqu} let us introduce
$$A_T(\xi_1, \ldots, \xi_{m+n})= \begin{cases}
&A(\xi_1, \ldots \xi_{m+n}) \text{ if } |\xi_j| < T \text{ }\forall  j =1, \ldots, m+n.\\
&0 \qquad\qquad\qquad\text{ otherwise.}
\end{cases}
$$
Set
$$B_\lambda(a, b) = \frac{1}{\sqrt{2\pi}}\int_a^b e^{-iu\lambda}u^{H_0 - \frac{3}{2}}du$$
for $0 \leq a \leq b < \infty$, and $B_\lambda(a, \infty)  = \lim_{b \to \infty}B_\lambda(a, b)$. By \cite[page 80]{Taqqu}, we get 
$$\sup_{0 \leq a \leq b}|B_\lambda(a, b)| \leq \frac{1}{\sqrt{2\pi}}\bigg(\frac{1}{H_0 - \frac{1}{2}} + \frac{2}{|\lambda|}\bigg).$$
Now, 
\begin{align*}
&\mathcal{F}A_T(\lambda_1,\ldots, \lambda_{m+n}) = \frac{1}{(\sqrt{2\pi})^{m+n}}\int_{\mathbb{R}^{m+n}}d\xi_1\ldots d\xi_{m+n} e^{i \sum_{j=1}^{m+n}\lambda_j\xi_j} \int_{\mathbb{R}^2}dz_1dz_2 \phi(z_1, z_2)\\
&\qquad\quad\qquad\qquad\qquad\qquad\times\prod_{j=1}^m(z_1 - \xi_j)_+^{H_0 - \frac{3}{2}}\prod_{j=m+1}^{m+n}(z_2 - \xi_j)_+^{H_0 - \frac{3}{2}}\mathbf{1}_{\{|\xi_j|<T, \forall j =1,\ldots, m+n\}}.
\end{align*}
The change of variables $\xi_j = z_1 - u_j$ for $j=1, \ldots, m$ and $\xi_j = z_2 - u_j$ for $j=m+1, \ldots, m+n$ yields
\begin{align*}
&\mathcal{F}A_T(\lambda_1,\ldots, \lambda_{m+n})\\
&= \frac{1}{(\sqrt{2\pi})^{m+n}}\int_{\mathbb{R}^{m+n}}du_1 \ldots du_{m+n} e^{-i \sum_{j=1}^{m+n}\lambda_j u_j} \int_{\mathbb{R}^2}dz_1dz_2 \phi(z_1, z_2)e^{i \sum_{j=1}^m\lambda_jz_1 }e^{i \sum_{j=m+1}^{m+n}\lambda_jz_2}\\
&\qquad\qquad\qquad\times \prod_{j=1}^{m} u_j^{H_0 - \frac{3}{2}}\mathbf{1}_{\{u_j > 0\}}\mathbf{1}_{\{ z_1 - T < u_j < z_1 + T\}}\prod_{j=m+1}^{m+n} u_j^{H_0 - \frac{3}{2}}\mathbf{1}_{\{u_j > 0\}}\mathbf{1}_{\{ z_2 - T < u_j < z_2 + T\}}.
\end{align*}
Suppose that $\lambda_1,\ldots, \lambda_{m+n}$ are different from zero. Since $\phi$ is integrable on $\mathbb{R}^2$ then
\begin{align*}
|\mathcal{F}A_T&(\lambda_1,\ldots, \lambda_{m+n})|\\
&\leq \int_{\mathbb{R}^2}dz_1dz_2 |\phi(z_1, z_2)| \prod_{j=1}^m B_{\lambda_j}(\max(0, z_1-T), \max(0, z_1+T))\\ 
&\qquad\qquad\qquad\quad\quad\times \prod_{j=m+1}^{m+n} B_{\lambda_j}(\max(0, z_2-T), \max(0, z_2+T))\\
& \leq \int_{\mathbb{R}^2}dz_1dz_2 |\phi(z_1, z_2)| \prod_{j=1}^{m+n}\frac{1}{\sqrt{2\pi}}\bigg(\frac{1}{H_0 -\frac{1}{2}} + \frac{2}{|\lambda_j|}\bigg),
\end{align*}
which is finite and uniformly bounded with respect to $T$. Thus, 
\begin{align*}
&\mathcal{F}A(\lambda_1,\ldots, \lambda_{m+n})= \lim_{T\to\infty}\mathcal{F}A_T(\lambda_1,\ldots, \lambda_{m+n})\\ 
& = 2\pi \mathcal{F}\phi(\lambda_1+ \ldots+ \lambda_m, \lambda_{m+1} + \ldots + \lambda_{m+n}) \prod_{j=1}^{m+n} \bigg(\frac{1}{\sqrt{2\pi}}\int_0^\infty e^{-iu\lambda_j}u^{H_0 -\frac{3}{2}}du \bigg).
\end{align*}
The integral inside the product is an improper Riemann integral. After the change of variables $v=u|\lambda_j|$, we get
\begin{align*}
\mathcal{F}A(&\lambda_1,\ldots, \lambda_{m+n})\\ 
& = 2\pi \mathcal{F}\phi(\lambda_1+ \ldots+ \lambda_m, \lambda_{m+1} + \ldots + \lambda_{m+n})\\
&\qquad\qquad\qquad\times \prod_{j=1}^{m+n} \bigg(|\lambda_j|^{\frac{1}{2}-H_0}\frac{1}{\sqrt{2\pi}}\int_0^\infty e^{-iu\text{sign} \lambda_j}u^{H_0 -\frac{3}{2}}du \bigg)\\
&= 2\pi \mathcal{F}\phi(\lambda_1+ \ldots+ \lambda_m, \lambda_{m+1} + \ldots + \lambda_{m+n}) \\
&\qquad\qquad\qquad\times\prod_{j=1}^{m+n} \bigg(|\lambda_j|^{\frac{1}{2}-H_0}\frac{1}{\sqrt{2\pi}}\Gamma(H_0 -\frac{1}{2})C(\lambda_j)\bigg),
\end{align*}
where $C(\lambda) = e^{-i\frac{\pi}{2}(H_0 - \frac{1}{2})}$ for $\lambda > 0, C(-\lambda) =\overline{C(\lambda)}$ and thus $|C(\lambda)|=1$ for all $\lambda \ne 0$, see appendix for the detailed computations. Applying Lemma \ref{Lemma6.1} by noticing that $C(\lambda_j)W(d\lambda_j) \overset{(d)}{=} W(d\lambda_j)$ (see \cite[Proposition 4.2]{Dobrushin}) and symmetrizing the Fourier transform of $A(\lambda_1, \ldots, \lambda_{m+n})$ lead to the desired conclusion.
\end{proof}

\subsection{Stochastic representations of the Rosenblatt process}
Let $(R^H(t))_{t \geq 0}$ be the Rosenblatt process of parameter $H \in (\frac{1}{2}, 1)$. The time representation of $R^H$ is 
\begin{align*}
R^H(t) &= a_1(D)\int_{\mathbb{R}^2}\bigg(\int_0^t (s-\xi_1)_+^{D-\frac{3}{2}}(s-\xi_2)_+^{D-\frac{3}{2}}ds\bigg)dB(\xi_1)dB(\xi_2)\\
 &=A_1(H)\int_{\mathbb{R}^2}\bigg(\int_0^t (s-\xi_1)_+^{\frac{H}{2}-1}(s-\xi_2)_+^{\frac{H}{2}-1}ds\bigg)dB(\xi_1)dB(\xi_2), 
\end{align*}
where $D = \frac{H+1}{2}$ and
$$a_1(D):= \frac{\sqrt{(D-1/2)(4D-3)}}{\beta(D-1/2, 2-2D)} = \frac{\sqrt{(H/2)(2H-1)}}{\beta(H/2, 1-H)}=:A_1(H).$$
Observe also that $1/2 < H<1 \Longleftrightarrow 3/4 < D < 1$. The corresponding spectral representation of this process, see for instance \cite{Taqqu, Taqqu2} or apply Lemma \ref{Lemma2}, is given by
\begin{align*}
R^H(t) &= a_2(D)\int_{\mathbb{R}^2}|\lambda_1|^{\frac{1}{2} - D}|\lambda_2|^{\frac{1}{2} - D}\frac{e^{i(\lambda_1+\lambda_2)t}-1}{i(\lambda_1+\lambda_2)}W(d\lambda_1)W(d\lambda_2)\\
 &=A_2(H)\int_{\mathbb{R}^2}|\lambda_1|^{-\frac{H}{2}}|\lambda_2|^{-\frac{H}{2}}\frac{e^{i(\lambda_1+\lambda_2)t}-1}{i(\lambda_1+\lambda_2)}W(d\lambda_1)W(d\lambda_2),
\end{align*}
where
$$a_2(D):= \sqrt{\frac{(2D-1)(4D-3)}{2[2\Gamma(2-2D)\sin (\pi (D-1/2))]^2}} = \sqrt{\frac{H(2H-1)}{2[2\Gamma(1-H)\sin (H\pi/2)]^2}}=:A_2(H).$$

\section{Proof of Theorem \ref{Theorem1}}
We are now in a position to give the proof of our Theorem \ref{Theorem1}. It is devided into four steps.

\subsection{Chaotic decomposition}

Using (\ref{eq:13}), we can write $X^{(q, H)}$ as a $q$-th Wiener-It\^{o} integral with respect to the standard two-sided Brownian motion $(B_t)_{t \in \mathbb{R}}$ as follows:
\begin{equation}\label{eq:5}
X^{(q, H)}_t = \int_{\mathbb{R}^q} L(x, t)(\xi_1,\ldots, \xi_q)dB(\xi_1)\ldots dB(\xi_q) = I_q^B(L(x, t)),
\end{equation}
where
\begin{equation}\label{eq:6}
L(x, t)(\xi_1,\ldots, \xi_q): = c(H, q) \int_{\mathbb{R}}\mathbf{1}_{[0, t]}(z) x(t -z) \prod_{j=1}^q (z - \xi_j)_+^{H_0 - \frac{3}{2}}dz,
\end{equation}
 with $c(H, q)$ and $H_0$ given by (\ref{eq:H2}). Applying the product formula (\ref{eq:P1}) for multiple Wiener-It\^{o} integrals, we easily obtain that
\begin{equation}\label{eq:dola2}
(X_t^{(q, H)})^2 - E[(X_t^{(q, H)})^2] = \sum_{r=0}^{q-1}r!\binom{q}{r}^2I_{2q-2r}^B(L(x, t) \widetilde{\otimes}_r L(x, t)).
\end{equation}
{\allowdisplaybreaks
Let us compute the contractions appearing in the right-hand side of (\ref{eq:dola2}). For every $ 0 \leq r \leq q-1$, by using Fubini's theorem we first have
\begin{align*}
L(x,& s) \otimes_r L(x, s) (\xi_1, \ldots, \xi_{2q-2r})\\ 
& = \int_{\mathbb{R}^r}dy_1 \ldots dy_r L(x, s)(\xi_1, \ldots, \xi_{q-r}, y_1, \ldots, y_r)L(x, s)(\xi_{q-r+1}, \ldots, \xi_{2q-2r}, y_1, \ldots, y_r)\\
&= c(H, q)^2 \int_{\mathbb{R}^r}dy_1 \ldots dy_r \int_0^s dz_1x(s-z_1) \prod_{j=1}^{q-r}(z_1 - \xi_j)_+^{H_0 - \frac{3}{2}}\prod_{i=1}^r(z_1 - y_i)_+^{H_0 - \frac{3}{2}}\\
& \qquad\qquad\quad\qquad\qquad\times  \int_0^s dz_2x(s-z_2) \prod_{j=q-r+1}^{2q-2r}(z_2 - \xi_j)_+^{H_0 - \frac{3}{2}}\prod_{i=1}^r(z_2 - y_i)_+^{H_0 - \frac{3}{2}}\\
& = c(H, q)^2 \int_{[0, s]^2}dz_1dz_2 x(s-z_1) x(s-z_2) \prod_{j=1}^{q-r}(z_1 - \xi_j)_+^{H_0 - \frac{3}{2}}\prod_{j=q-r+1}^{2q-2r}(z_2 - \xi_j)_+^{H_0 - \frac{3}{2}}\\
&\qquad\qquad\qquad\qquad\quad\times \bigg(\int_{\mathbb{R}}dy (z_1 - y)_+^{H_0 - \frac{3}{2}} (z_2 - y)_+^{H_0 - \frac{3}{2}}\bigg)^r,
\end{align*}
}and, since for any $z_1, z_2 \geq 0$
\begin{equation}\label{eq:2}
 \int_{\mathbb{R}}(z_1-y)_+^{H_0 - \frac{3}{2}}(z_2-y)_+^{H_0 -\frac{3}{2}}dy = \beta\Big(H_0 - \frac{1}{2}, 2-2H_0\Big)|z_1-z_2|^{2H_0-2},
\end{equation}
we end up with the following expression
 \begin{align}\label{1}
&L(x, s) \otimes_r L(x, s) (\xi_1, \ldots, \xi_{2q-2r}) \nonumber \\ 
& = c(H, q)^2\beta\Big(H_0 - \frac{1}{2}, 2-2H_0\Big)^r\int_{[0, s]^2}dz_1dz_2 x(s-z_1) x(s-z_2)|z_1 -z_2|^{(2H_0-2)r} \nonumber \\
&\qquad\qquad\qquad\qquad\qquad\qquad\qquad\times \prod_{j=1}^{q-r}(z_1 - \xi_j)_+^{H_0 - \frac{3}{2}}\prod_{j=q-r+1}^{2q-2r}(z_2 - \xi_j)_+^{H_0 - \frac{3}{2}}.
 \end{align}
Recall $G_T^{(q, H)}$ from (\ref{eq:Gt}). As a consequence, we can write 
\begin{equation}\label{eq:8}
G_T^{(q, H)} (t)= F_{2q, T}(t) + c_{2q-2}F_{2q-2, T}(t) + \ldots + c_4F_{4, T}(t) + c_2F_{2, T}(t)
\end{equation}
where $c_{2q-2r}:= r!\binom{q}{r}^2$ and for $ 0 \leq r \leq q-1$,
\begin{equation}\label{eq:7}
F_{2q-2r, T}(t): = \frac{1}{T^{2H_0 -1}}\int_0^{Tt} I_{2q-2r}^B(L(x,s) \widetilde{\otimes}_r L(x, s))ds,
\end{equation}
where the kernels in each Wiener integral above are given explicitly in (\ref{1}).

\subsection{Spectral representations}

Recall the expression of the contractions $L(x, s) \otimes_r L(x, s), 0\leq r \leq q-1$ given in (\ref{1}). Set
\begin{align*}
\phi_r(s, z_1, z_2) :=&c(H, q)^2\beta\Big(H_0 - \frac{1}{2}, 2-2H_0\Big)^r\\
&\times  \mathbf{1}_{[0, s]}(z_1)\mathbf{1}_{[0, s]}(z_2)x(s-z_1)x(s-z_2) |z_1-z_2|^{(2H_0-2)r}.
\end{align*}
It is a symmetric function with respect to $z_1$ and $z_2$. Furthermore, by H\"{o}lder's inequality, we have
\begin{align*}
&\int_{\mathbb{R}^2}\Big|\mathbf{1}_{[0, s]}(z_1)\mathbf{1}_{[0, s]}(z_2)x(s-z_1)x(s-z_2) |z_1-z_2|^{(2H_0-2)r}\Big|dz_1dz_2\\
&\leq \int_{[0, s]^2} |x(s-z_1)| |x(s-z_2)| |z_1-z_2|^{(2H_0-2)r}dz_1dz_2 \\
&= \int_{[0, s]^2} |x(z_1)| |x(z_2)| |z_1-z_2|^{r\frac{(2H-2)}{q}}dz_1dz_2\\
&\leq \bigg(\int_{[0, \infty)^2}|x(z_1)||x(z_2)||z_1 -z_2|^{2H-2}dz_1dz_2 \bigg)^{\frac{r}{q}}\bigg(\int_0^\infty |x(z)|dz\bigg)^{2(1-\frac{r}{q})}.
\end{align*}
Using the integrability of $x$ together with the assumption (\ref{eq:1}), it turns out that $\phi_r(. , z_1, z_2) $ is integrable on $\mathbb{R}^2_+$. Applying Lemma \ref{Lemma2} with $m=n=q-r$, we get
\begin{align*}
F_{2q-2r, T}(t) &= \frac{1}{T^{2H_0 - 1}}\int_0^{Tt} I_{2q-2r}^B(L(x, s) \widetilde{\otimes}_r L(x, s))ds \nonumber\\
&  \overset{(d)}{=} A_r(H, q) \frac{1}{T^{2H_0 -1}} \int_{\mathbb{R}^{2q-2r}}W(d\lambda_1)\ldots W(d\lambda_{2q-2r}) \prod_{j=1}^{2q-2r}|\lambda_j|^{\frac{1}{2}-H_0} \nonumber\\
&\times \frac{1}{(2q-2r)!}\sum_{\sigma \in \mathfrak{S}_{2q-2r}}\int_0^{Tt}ds \int_{[0, s]^2}d\xi_1d\xi_2 x(s-\xi_1) x(s-\xi_2)|\xi_1 -\xi_2|^{(2H_0-2)r} \nonumber\\
&\qquad\qquad\qquad\qquad\qquad\qquad\times e^{i(\lambda_{\sigma(1)} + \ldots+ \lambda_{\sigma(q-r)})\xi_1}e^{i(\lambda_{\sigma(q-r+1)} + \ldots + \lambda_{\sigma(2q-2r)})\xi_2},
 \end{align*}
where
\begin{equation}\label{eq:Ar}
A_r(H,q) := c(H, q)^2 \beta(H_0 - \frac{1}{2}, 2- 2H_0)^r \bigg( \frac{\Gamma(H_0 - \frac{1}{2})}{\sqrt{2\pi}}\bigg)^{2q-2r}.
\end{equation}
The change of variable $s = Ts'$ yields
\begin{align*}
F_{2q-2r, T}(t) & \overset{(d)}{=} A_r(H, q)T^{2-2H_0} \int_{\mathbb{R}^{2q-2r}}W(d\lambda_1)\ldots W(d\lambda_{2q-2r}) \prod_{j=1}^{2q-2r}|\lambda_j|^{\frac{1}{2}-H_0}\nonumber\\
&\times \frac{1}{(2q-2r)!}\sum_{\sigma \in \mathfrak{S}_{2q-2r}}\int_0^{t}ds \int_{[0, Ts]^2}d\xi_1d\xi_2 x(Ts-\xi_1) x(Ts-\xi_2)|\xi_1 -\xi_2|^{(2H_0-2)r} \nonumber\\
&\qquad\qquad\qquad\qquad\qquad\qquad\times e^{i(\lambda_{\sigma(1)} + \ldots+ \lambda_{\sigma(q-r)})\xi_1}e^{i(\lambda_{\sigma(q-r+1)} + \ldots + \lambda_{\sigma(2q-2r)})\xi_2}.
 \end{align*}
Let us do a further change of variables: $\lambda'_{\sigma(j)} = T\lambda_{\sigma(j)}, j = 1, \ldots, 2q-2r $ and $\xi'_k = Ts -  \xi_k, k=1,2$. Thanks to the self-similarity of $W$ with index $1/2$ (that is, $W(T^{-1}d\lambda)$ has the same law as $T^{-1/2}W(d\lambda)$) we finally obtain that
\begin{align}\label{4}
F_{2q-2r, T}(t) & \overset{(d)}{=} A_r(H, q)T^{-(2-2H_0)(q-1-r)} \nonumber \\
& \times\int_{\mathbb{R}^{2q-2r}}W(d\lambda_1)\ldots W(d\lambda_{2q-2r}) \prod_{j=1}^{2q-2r}|\lambda_j|^{\frac{1}{2}-H_0}\int_0^{t}ds e^{i(\lambda_1 + \ldots + \lambda_{2q-2r})s}\nonumber\\
&\times \frac{1}{(2q-2r)!}\sum_{\sigma \in \mathfrak{S}_{2q-2r}} \int_{[0, Ts]^2}d\xi_1d\xi_2 x(\xi_1) x(\xi_2)|\xi_1 -\xi_2|^{(2H_0-2)r}  \nonumber\\
&\qquad\qquad\qquad\qquad\quad\times e^{-i (\lambda_{\sigma(1)} + \ldots+ \lambda_{\sigma(q-r)})\frac{\xi_1}{T}}e^{-i (\lambda_{\sigma(q-r+1)} + \ldots + \lambda_{\sigma(2q-2r)})\frac{\xi_2}{T}}.
 \end{align}

\subsection{Reduction lemma}

\begin{lemma}\label{Reduction}
Fix $t$, fix $H \in (\frac{1}{2}, 1)$ and fix $q \geq 2$. Assume (\ref{eq:1}) and  the integrability of the kernel $x$. Then for any $r \in \{0, \ldots, q-2 \}$, one has
$$\lim_{T \to \infty} E[F_{2q-2r, T}(t)^2]= 0.$$ 
\end{lemma}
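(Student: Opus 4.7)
The plan is to apply the $L^2$-isometry for multiple Wiener--It\^o integrals to the spectral representation~(\ref{4}) just obtained. Writing $F_{2q-2r,T}(t) \overset{(d)}{=} A_r(H,q)\, T^{-(2-2H_0)(q-1-r)}\, I_{2q-2r}^W(K_T)$ with $K_T$ the symmetrised spectral kernel appearing in~(\ref{4}), the isometry~(\ref{eq:21}) yields
\[
E[F_{2q-2r,T}(t)^2] = A_r(H,q)^2\, T^{-(4-4H_0)(q-1-r)}\, (2q-2r)!\, \|K_T\|^2_{L^2(\mathbb{R}^{2q-2r})}.
\]
Since $r\leq q-2$ forces $q-1-r\geq 1$ and $H_0<1$ forces $4-4H_0>0$, the explicit prefactor already tends to $0$; the task reduces to proving that $\|K_T\|^2$ stays bounded in $T$ (or at worst grows more slowly than $T^{(4-4H_0)(q-1-r)}$).

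I find it most transparent to carry out the required bound back in the time domain. Using the $L^2$-isometry for $I^B$ and Fubini,
\[
E[F_{2q-2r,T}(t)^2] = \frac{(2q-2r)!}{T^{4H_0-2}}\int_0^{Tt}\!\!\int_0^{Tt}\langle L(x,s)\widetilde{\otimes}_r L(x,s),\, L(x,s')\widetilde{\otimes}_r L(x,s')\rangle_{L^2}\,ds\,ds'.
\]
Expanding the symmetrised inner product via $(2q-2r)!\langle\tilde f,\tilde g\rangle = \sum_{\sigma}\langle f, g\circ\sigma\rangle$, inserting~(\ref{1}) for each unsymmetrised contraction, and integrating out the $\xi$-variables with~(\ref{eq:2}), one obtains a finite sum over $k\in\{0,\ldots,q-r\}$ of four-fold integrals $\Psi_k(s,s')$ whose integrand carries the six singular factors $|z_1-z_2|^{(2H_0-2)r}$, $|z'_1-z'_2|^{(2H_0-2)r}$, and $|z_a-z'_b|^{(2H_0-2)n_{a,b}}$ for $(a,b)\in\{1,2\}^2$, with $n_{1,1}=n_{2,2}=k$ and $n_{1,2}=n_{2,1}=q-r-k$. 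After the successive changes of variables $\xi_i=s-z_i$, $\eta_i=s'-z'_i$, then $s=T\tilde s$, $s'=T\tilde s'$, the question becomes whether $T^{4-4H_0}\int_0^t\!\!\int_0^t \Psi_k(T\tilde s,T\tilde s')\,d\tilde s\,d\tilde s'\to 0$ for every $k$.

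I would split this integration into an \emph{off-diagonal} piece $\{|\tilde s-\tilde s'|>1/T\}$ and a \emph{near-diagonal} piece $\{|\tilde s-\tilde s'|\leq 1/T\}$. On the off-diagonal piece, each factor $|T(\tilde s-\tilde s')+\eta-\xi|$ is dominated by $T|\tilde s-\tilde s'|$ on the bulk support of $x$, producing
\[
\Psi_k(T\tilde s,T\tilde s') \lesssim T^{(4H_0-4)(q-r)}\,|\tilde s-\tilde s'|^{(4H_0-4)(q-r)}\,J_r^2,\qquad J_r:=\int x(u)x(v)|u-v|^{(2H_0-2)r}\,du\,dv,
\]
with $J_r<\infty$ by H\"older interpolation between condition~(\ref{eq:1}) and the integrability of $x$. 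Rescaling in $(\tilde s,\tilde s')$ yields an off-diagonal contribution of order $T^{-(4-4H_0)(q-r-1)}=o(1)$; a short supplementary argument covers the subcase $(4H_0-4)(q-r)\leq -1$ where the limiting power of $|\tilde s-\tilde s'|$ ceases to be locally integrable, by exploiting the $T^{-1}$ lower cutoff and checking that the bound is still $o(1)$. On the near-diagonal piece, a uniform bound $\Psi_k\leq C$ (via the same H\"older argument applied to the full six-singularity integrand) combined with its Lebesgue measure $O(1/T)$ gives a contribution of order $T^{3-4H_0}$, which is $o(1)$ since $H_0>3/4$ holds whenever $q\geq 2$ and $H\in(1/2,1)$.

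The main obstacle will be to secure the uniform H\"older-interpolation bound on $\Psi_k$ that controls all six $|z-z'|^{(2H_0-2)n_{a,b}}$ singularities simultaneously, and to justify the pointwise asymptotic on the off-diagonal set uniformly in the $\xi,\eta$ variables---essentially verifying the domination heuristic $|T(\tilde s-\tilde s')+\eta-\xi|\sim T|\tilde s-\tilde s'|$ modulo a negligible set where $|\eta-\xi|$ is atypically large (controlled by the integrability of $x$). The threshold $H_0>3/4$ emerging in the near-diagonal estimate is precisely what forces the restriction $q\geq 2$ in Theorem~\ref{Theorem1} and accounts for its sharp dichotomy with Theorem~\ref{Theorem2}.
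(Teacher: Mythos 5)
Your opening reduction is exactly the paper's: apply the isometry (\ref{eq:21}) to the spectral representation (\ref{4}), observe that the prefactor $T^{-2(2-2H_0)(q-1-r)}$ vanishes because $q-1-r\geq 1$ and $H_0<1$, and reduce to boundedness of the remaining $L^2$-norm. Where you diverge is in how that boundedness is established. The paper stays in the spectral domain: the inner $(s,\xi_1,\xi_2)$-integral is bounded uniformly in $T$ (by the quantity you call $J_r$, finite by the H\"older interpolation between (\ref{eq:1}) and $x\in L^1$) and converges pointwise to $J_r\cdot\frac{e^{i(\lambda_1+\cdots+\lambda_{2q-2r})}-1}{i(\lambda_1+\cdots+\lambda_{2q-2r})}$, so the whole matter collapses to one clean finiteness check, namely $\int_{\mathbb{R}^{2q-2r}}\prod_j|\lambda_j|^{1-2H_0}\bigl|\frac{e^{i\Lambda}-1}{i\Lambda}\bigr|^2\,d\lambda<\infty$, verified separately near zero and at infinity. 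The singular kernels $|z_a-z_b'|^{2H_0-2}$ that dominate your computation never appear: in Fourier variables they have become the bounded oscillatory factors $e^{-i\lambda\xi/T}$. That is precisely the payoff of Lemma \ref{Lemma2} and the reason the paper sets up the spectral machinery in the first place.

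Your alternative --- returning to the time domain, expanding $(2q-2r)!\langle\widetilde{f},\widetilde{g}\rangle$ into the $\Psi_k$'s via (\ref{1}) and (\ref{eq:2}), and splitting $[0,t]^2$ at $|\tilde s-\tilde s'|=1/T$ --- has correct power counting (the off-diagonal exponent $-(4-4H_0)(q-r-1)$ and the near-diagonal exponent $3-4H_0$ are right, and $H_0>3/4$ does hold automatically for $q\geq2$). But as written it is a plan rather than a proof, and the two steps you defer are not routine. First, the uniform bound $\Psi_k\leq C$ requires controlling a product of \emph{six} negative-power singularities against four copies of $|x|$; the natural weights $n_{ab}/q$ and $r/q$ sum to $2$, not $1$, so the two-factor interpolation that gives $J_r<\infty$ does not apply directly, and one must first group the factors (e.g.\ by Cauchy--Schwarz in $(z_1,z_2)$ versus $(z_1',z_2')$) before interpolating --- this needs to be exhibited, since a wrong grouping produces non-integrable exponents. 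Second, the off-diagonal domination $|T(\tilde s-\tilde s')+\eta_b-\xi_a|\gtrsim T|\tilde s-\tilde s'|$ fails on the set where $\eta_b-\xi_a$ nearly cancels $T(\tilde s-\tilde s')$, and your proposal to dismiss that set as ``negligible'' leans on the very same uniform six-singularity bound, so the argument as sketched is circular at this point; with only $x\in L^1$ and (\ref{eq:1}) (no decay rate for $x$), dispatching this exceptional set takes genuine work. Both difficulties evaporate in the paper's spectral route, which is why I would either complete these two estimates in detail or revert to the Fourier-side argument after your (correct) first display.
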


\begin{proof}
Without loss of generality, we may and will assume that $t=1$. From the spectral representation of multiple Wiener-It\^{o} integrals (\ref{4}), one has 
\eject
\begin{align*}
&E[F_{2q-2r, T}(1)^2]   \\
&=  T^{-2(2-2H_0)(q-1-r)}  A_r^2(H, q)(2q-2r)!\int_{\mathbb{R}^{2q-2r}} d\lambda_1 \ldots d\lambda_{2q-2r} \prod_{j=1}^{2q-2r}|\lambda_j|^{1-2H_0}\\
&\times \bigg( \frac{1}{(2q-2r)!}\sum_{\sigma \in \mathfrak{S}_{2q-2r}}\int_0^{1}ds e^{i(\lambda_1 + \ldots + \lambda_{2q-2r})s} \int_{[0, Ts]^2}d\xi_1d\xi_2 x(\xi_1) x(\xi_2)|\xi_1 -\xi_2|^{(2H_0-2)r} \\
 &\hspace{6cm}\times e^{-i (\lambda_{\sigma(1)} + \ldots+ \lambda_{\sigma(q-r)})\frac{\xi_1}{T}}e^{-i (\lambda_{\sigma(q-r+1)} + \ldots + \lambda_{\sigma(2q-2r)})\frac{\xi_2}{T}} \bigg)^2.
\end{align*}
Since $x$ is a real-valued integrable function on $[0, \infty)$ satisfying assumption (\ref{eq:1}), we deduce from Lebesgue dominated convergence that, as $T \to \infty$,
\begin{align*}
& \frac{1}{(2q-2r)!}\sum_{\sigma \in \mathfrak{S}_{2q-2r}}\int_0^{1}ds e^{i(\lambda_1 + \ldots + \lambda_{2q-2r})s} \int_{[0, Ts]^2}d\xi_1d\xi_2 x(\xi_1) x(\xi_2)|\xi_1 -\xi_2|^{(2H_0-2)r} \\
 &\hspace{6cm} \times e^{-i (\lambda_{\sigma(1)} + \ldots+ \lambda_{\sigma(q-r)})\frac{\xi_1}{T}}e^{-i (\lambda_{\sigma(q-r+1)} + \ldots + \lambda_{\sigma(2q-2r)})\frac{\xi_2}{T}}\\
&\longrightarrow \int_{[0, \infty)^2}x(u)x(v)|u-v|^{(2H_0-2)r}dudv \int_0^1 e^{i(\lambda_1+\ldots + \lambda_{2q-2r})s}ds.
\end{align*}
Since $1- \frac{1}{2q} < H_0 < 1$ and $0 \leq r \leq q-2$, we have $ T^{-2(2-2H_0)(q-1-r)} \to 0 $ as $T \to \infty$. Moreover, since $\int_0^1 e^{i(\lambda_1+\ldots + \lambda_{2q-2r})\xi}d\xi = \frac{e^{i(\lambda_1+\ldots + \lambda_{2q-2r})} -1}{i(\lambda_1 + \ldots + \lambda_{2q-2r})}$,
\begin{align*}
&\int_{\mathbb{R}^{2q-2r}} d\lambda_1 \ldots d\lambda_{2q-2r} \prod_{j=1}^{2q-2r}|\lambda_j|^{1-2H_0} \bigg|\frac{e^{i(\lambda_1+\ldots + \lambda_{2q-2r})} -1}{i(\lambda_1 + \ldots + \lambda_{2q-2r})}\bigg|^2 \leq \bigg(\int_{\mathbb{R}} |\lambda|^{1-2H_0}d\lambda\bigg)^{2q-2r}
\end{align*}
which is integrable at zero, and
\begin{align*}
&\int_{\mathbb{R}^{2q-2r}} d\lambda_1 \ldots d\lambda_{2q-2r} \prod_{j=1}^{2q-2r}|\lambda_j|^{1-2H_0} \bigg|\frac{e^{i(\lambda_1+\ldots + \lambda_{2q-2r})} -1}{i(\lambda_1 + \ldots + \lambda_{2q-2r})}\bigg|^2\\ 
& \leq \int_{\mathbb{R}^{2q-2r}} d\lambda_1 \ldots d\lambda_{2q-2r} \prod_{j=1}^{2q-2r}|\lambda_j|^{1-2H_0} \frac{4}{(\lambda_1 + \ldots + \lambda_{2q-2r})^2}
\end{align*}
which is integrable at infinity, we have
$$\int_{\mathbb{R}^{2q-2r}} d\lambda_1 \ldots d\lambda_{2q-2r} \prod_{j=1}^{2q-2r}|\lambda_j|^{1-2H_0} \bigg|\frac{e^{i(\lambda_1+\ldots + \lambda_{2q-2r})} -1}{i(\lambda_1 + \ldots + \lambda_{2q-2r})}\bigg|^2 < \infty.$$
All these facts taken together imply 
\begin{equation}\label{eq:10}
E[F_{2q-2r, T}(1)^2] \longrightarrow 0, \text{ as } T \to \infty, \text{ for all } 0 \leq r \leq q-2,
\end{equation}
which proves the lemma.
\end{proof}

\subsection{Concluding the proof of Theorem \ref{Theorem1}}
Thanks to Lemma \ref{Reduction}, we are left to concentrate on the convergence of the term $F_{2, T}$ (belonging to the second Wiener chaos) corresponding to $r=q-1$. Recall from (\ref{4}) that $F_{2, T}(t)$ has the same law as the double Wiener integral with symmetric kernel given by
\begin{align}\label{eq:15}
f_T(t, \lambda_1, &\lambda_2):= A_{q-1}(H,q) |\lambda_1|^{\frac{1}{2} - H_0}|\lambda_2|^{\frac{1}{2} - H_0}\int_0^t ds e^{i(\lambda_1 +\lambda_2)s} \nonumber\\ 
&\qquad\quad  \times  \int_{[0, Ts]^2}d\xi_1d\xi_2 e^{-i(\lambda_1\frac{\xi_1}{T} + \lambda_2\frac{\xi_2}{T})}x(\xi_1)x(\xi_2)|\xi_1 - \xi_2|^{(q-1)(2H_0 - 2)}.
\end{align}
Observe that $f_T(t, .)$ is symmetric, so there is no need to care about symmetrization. By the isometry property of multiple Wiener-It\^{o} integrals with respect to the random spectral measure, in order to prove the $L^2(\Omega)$-convergence of $c_2F_{2, T}$ to $bR^{H'}$, we can equivalently prove that $c_2f_T(t, .)$ converges in $L^2(\mathbb{R}^2)$ to the kernel of $bR^{H'}(t)$ . First, by Lebesgue dominated convergence, as $T \to \infty$, we have
\begin{align*}
f_T(t, \lambda_1, \lambda_2)& \longrightarrow A_{q-1}(H, q) \int_{\mathbb{R}^2}x(u)x(v)|u-v|^{(q-1)(2H_0 - 2)}dudv\\ 
&\qquad\qquad\qquad\qquad\qquad\times  |\lambda_1|^{\frac{1}{2} - H_0}|\lambda_2|^{\frac{1}{2} - H_0} \frac{e^{i(\lambda_1 + \lambda_2)t} -1}{i(\lambda_1 + \lambda_2)}.
\end{align*}
This shows that $f_T(t, .)$ converges pointwise to the kernel of $R^{H'}(t)$, up to some constant. 
Moreover, for all $0 < S <T $,
\begin{align*}
&\|f_T(t, .) - f_S(t, .)\|_{L^2(\mathbb{R}^2)}^2\\
& = A_{q-1}^2(H, q) \int_{\mathbb{R}^2}d\lambda_1 d\lambda_2 |\lambda_1|^{1 - 2H_0}|\lambda_2|^{1 - 2H_0}\\ 
&\quad \times \bigg( \int_0^t ds e^{i(\lambda_1 +\lambda_2)s}\int_{[0, Ts]^2 \setminus  [0, Ss]^2}d\xi_1d\xi_2 e^{-i(\lambda_1\frac{\xi_1}{T} + \lambda_2\frac{\xi_2}{T})}x(\xi_1)x(\xi_2)|\xi_1 - \xi_2|^{(q-1)(2H_0 - 2)}\bigg)^2.
\end{align*}
By Lebesgue dominated convergence, it comes that $\|f_T(t, .) - f_S(t, .)\|_{L^2(\mathbb{R}^2)}^2 \longrightarrow 0$ as $T, S \to \infty$. It follows that $(f_T(t, .))_{T \geq 0}$ is a Cauchy sequence in $L^2(\mathbb{R}^2)$. Hence, the multiple Wiener integral $c_2F_{2, T}$ (with kernel (\ref{eq:15})) converges in $L^2(\Omega)$ to $b(H, q) \times R^{H'}$ with the explicit constant $b(H, q)$ as in (\ref{eq:19}). (Note that $c_2 =q!$). The finite-dimensional convergence then follows from (\ref{4}). The proof of Theorem \ref{Theorem1} is achieved.
\qed

\section{Proof of Theorem \ref{Theorem2}}

We follow the same route as for the proof of Theorem \ref{Theorem1}, with some slight modifications. Here, the chaos decomposition of $G_T^{(1, H)}$ contains uniquely the term $F_{2, T}$ obtained for $q=1$ and $ r = 0$. Its spectral representation is as follows:
\begin{align*}
F_{2, T}(t)&= \frac{H(2H-1)}{\beta(H-\frac{1}{2}, 2-2H)}\frac{\Gamma^2(H-\frac{1}{2})}{2\pi} \int_{\mathbb{R}^2}W(d\lambda_1)W(d\lambda_2) |\lambda_1|^{\frac{1}{2} - H}|\lambda_2|^{\frac{1}{2} - H}\nonumber\\ 
&\qquad\qquad\qquad  \times \int_0^t ds e^{i(\lambda_1 +\lambda_2)s}  \int_{[0, Ts]^2}d\xi_1d\xi_2 e^{-i(\lambda_1\frac{\xi_1}{T} + \lambda_2\frac{\xi_2}{T})}x(\xi_1)x(\xi_2).
\end{align*}
It is easily seen that that $F_{2, T}$ is well-defined if and only if $3/4 < H <1$. The same arguments as in the proof of Theorem \ref{Theorem1} yield
\begin{equation}\label{eq:d}
G_T^{(1, H)}(t) = F_{2, T}(t) \longrightarrow \frac{H(2H-1)}{\sqrt{(H-1/2)(4H-3)}}\bigg(\int_0^\infty x(u)du\bigg)^2 \times R^{H''}(t)
\end{equation}
in $L^2(\Omega)$ as $T \to \infty$, thus completing the proof of the theorem. \qed

\section*{Acknowledgements} 
I would like to sincerely thank my supervisor Ivan Nourdin, who has led the way on this work. I did appreciate his advised tips and encouragement for my first research work. I also warmly thank Frederi Viens for interesting discussions and several helpful comments about this work. Also, I would like to  thank my friend, Nguyen Van Hoang, for his help in proving the identity about $I$ in the appendix. Finally, I deeply thank an anonymous referee for a very careful and thorough reading of this work, and for her/his constructive remarks.

\section*{Appendix}
 
The following identity has been used at the end of the proof of Lemma \ref{Lemma2} and also appeared in the proof of \cite[Lemma 6.2]{Taqqu}. 

For all $H_0 \in (1/2, 1)$, we have
$$I: = \int_0^\infty e^{-iu}u^{H_0 - \frac{3}{2}}du =  e^{-i\frac{\pi}{2}(H_0 - \frac{1}{2})} \Gamma(H_0 - \frac{1}{2}).$$
\begin{proof}
First, observe that
$$u^{H_0 - \frac{3}{2}} = \frac{1}{\Gamma(\frac{3}{2} - H_0)} \int_0^\infty e^{-tu}t^{\frac{1}{2} - H_0}dt.$$
Then, Fubini's theorem yields
\begin{align*}
I& =\frac{1}{\Gamma(\frac{3}{2} - H_0)} \int_0^\infty du e^{-iu}  \int_0^\infty dt e^{-tu}t^{\frac{1}{2} - H_0}\\ 
& = \frac{1}{\Gamma(\frac{3}{2} - H_0)} \int_0^\infty dt \text{ } t^{\frac{1}{2} - H_0} \int_0^\infty du e^{-u(t+i)}\\
&=  \frac{1}{\Gamma(\frac{3}{2} - H_0)} \int_0^\infty  t^{\frac{1}{2} - H_0} \frac{1}{t+i} dt=  \frac{1}{\Gamma(\frac{3}{2} - H_0)} \int_0^\infty  \frac{t^{\frac{1}{2} - H_0}(t-i)}{t^2+1}dt\\
& = \frac{1}{\Gamma(\frac{3}{2} - H_0)} \bigg( \int_0^\infty  \frac{t^{\frac{3}{2} - H_0}}{t^2+1}dt - i \int_0^\infty  \frac{t^{\frac{1}{2} - H_0}}{t^2+1}dt \bigg).
\end{align*}
A change of variables $t = \sqrt{u}$ and $v= \frac{u}{u+1}$ leads to 
\begin{align*}
\int_0^\infty  \frac{t^{\frac{3}{2} - H_0}}{t^2+1}dt& = \frac{1}{2}\int_0^\infty \frac{u^{\frac{1-2H_0}{4}}}{u+1}du =  \frac{1}{2} \int_0^1 v^{\frac{1-2H_0}{4}}(1-v)^{\frac{2H_0 -  5}{4}} \\ 
& =  \frac{1}{2}  \beta \Big(\frac{5-2H_0 }{4}, \frac{2H_0 -  1}{4}\Big) = \frac{1}{2} \frac{\Gamma(\frac{5-2H_0 }{4})\Gamma(\frac{2H_0 -1}{4})}{\Gamma(1)}.
\end{align*}
Similarly, one also has,
$$\int_0^\infty  \frac{t^{\frac{1}{2} - H_0}}{t^2+1}dt = \frac{1}{2}  \beta \Big(\frac{3-2H_0 }{4}, \frac{2H_0 +  1}{4}\Big) = \frac{1}{2} \frac{\Gamma(\frac{3-2H_0 }{4})\Gamma(\frac{2H_0 +1}{4})}{\Gamma(1)}.$$
Furthermore, by using the identity $\Gamma(1-z)\Gamma(z) = \frac{\pi}{\sin(\pi z)}, 0<z<1$, we obtain
\begin{align*}
I&=\frac{1}{2 \Gamma(\frac{3}{2} - H_0)}   \bigg(\frac{\pi}{\sin (\frac{2H_0 -1}{4} \pi)} - i \frac{\pi}{\sin (\frac{3-2H_0}{4} \pi)}  \bigg)\\ 
& = \frac{1}{2 \Gamma(\frac{3}{2} - H_0)}   \bigg(\frac{\pi}{\sin (\frac{2H_0 -1}{4} \pi)} - i \frac{\pi}{\cos (\frac{2H_0-1}{4} \pi)}  \bigg)\\
&= \frac{\pi}{ \Gamma(\frac{3}{2} - H_0)} \frac{e^{-i\frac{\pi}{2}(H_0 - \frac{1}{2})}}{2\sin (\frac{2H_0 -1}{4} \pi)\cos(\frac{2H_0 -1}{4} \pi)}\\
& = \frac{e^{-i\frac{\pi}{2}(H_0 - \frac{1}{2})} \pi}{ \Gamma(\frac{3}{2} - H_0) \sin (\frac{2H_0 -1}{2} \pi)} = e^{-i\frac{\pi}{2}(H_0 - \frac{1}{2})} \Gamma(H_0 - \frac{1}{2}).
\end{align*}

\end{proof}

\end{document}